\numberwithin{equation}{section}
\newtheorem{theo}{Theorem}[section]
\newtheorem{theorem}{Theorem}[section]
\newtheorem{proposition}[theo]{Proposition}
\newtheorem{lemma}[theo]{Lemma}
\theoremstyle{definition}
\newtheorem{definition}[theo]{Definition}
\theoremstyle{remark}
\newcommand{\Z}{\mathbb Z}
\newcommand{\R}{\mathbb R}
 \newcommand{\ZZ}{\mathsf{Z}}
\newcommand{\A}{\mathscr A}
\renewcommand{\P}{\mathbb P}
\newcommand{\Q}{\mathbb Q}
\newcommand{\eps}{\varepsilon}
\newcommand{\bo}{\boldsymbol{\omega}}
\newcommand{\by}{\boldsymbol{y}}
\DeclareMathAlphabet\mathscr{U}{eus}{m}{n} \SetMathAlphabet\mathscr{bold}{U}{eus}{b}{n} \DeclareMathAlphabet\matheur{U}{eur}{m}{n} \SetMathAlphabet\matheur{bold}{U}{eur}{b}{n}
\def\mybf #1{\textbf{\textit{#1}}}
\begin{document}

\title{Thermodynamics of the Binary Symmetric Channel}

\author{Evgeny Verbitskiy}
\address{Mathematical Institute, Leiden University,
Postbus 9512, 2300 RA Leiden, The  Netherlands \newline\indent \textup{and} \newline\indent Department of
Mathematics, University of Groningen, PO Box 407, 9700 AK Groningen, The Netherlands}
\email{evgeny@math.leidenuniv.nl}
\date{\today}
\begin{abstract} 
We study a hidden Markov process which is the result of
a transmission of
the binary symmetric Markov source over the memoryless binary symmetric channel. 
This process has been studied extensively in Information Theory and is often used as 
a benchmark case for the so-called denoising algorithms.
Exploiting the link between this process
and the 1D Random Field Ising Model (RFIM), we are able to identify the Gibbs potential of the resulting Hidden Markov process. Moreover, we obtain a stronger
bound  on the memory decay rate.
We conclude with a discussion on implications of our results
for the development of denoising algorithms.

\end{abstract}
\keywords{Hidden Markov models, Gibbs states,  Thermodynamic formalism,
Denosing}
\subjclass[2000]{37D35, 82B20, 82B20}
\maketitle
\section{Introduction}

We study the binary symmetric Markov source over the memoryless binary symmetric channel.
More specifically, let $\{X_n\}$ be a stationary two-state Markov chain with values $\{\pm 1\}$,
and 
$$
\P( X_{n+1}\ne X_n) =p,
$$
where $0<p<1$.  
The binary symmetric channel will be modelled as a sequence of Bernoulli random variables
$\{Z_n\}$ with
$$
\P_Z(Z_n=-1)=\eps,\quad \P_Z(Z_n=1)=1-\eps.
$$
Finally, put  
\begin{equation}\label{model}
Y_n=X_n\cdot Z_n
\end{equation}
for all $n$.
The process $\{Y_n\}$ is a hidden Markov process, because $Y_n\in\{-1,1\}$ is chosen independently for any $n$ from an {\it emission} distribution $\pi_{X_n}$ on $\{-1,1\}$: $\pi_1=(\eps,1-\eps)$ and $\pi_{-1}=(1-\eps,\eps)$. 

The law $\Q$ of the process $\{Y_n\}$ is the push-forward of $\P\times\P_Z$ under $\psi: \{-1,1\}^\Z\times\{-1,1\}^\Z\mapsto \{-1,1\}^\Z$,
with $\psi( (x_n,z_n) )=x_n\cdot z_n$.
We write $\Q=(\P\times\P_Z)\circ\psi^{-1}$. For every $m\le n$, and $y_m^n:=(y_m,\ldots, y_n)\in \{-1,1\}^{n-m+1}$, the measure of the corresponding cylindric set is given by
\begin{equation}\label{eq:cylone}
\aligned
\Q(y_m^n)&:=\Q(Y_m=y_m,\ldots,Y_n=y_n)\\
& =\sum_{x_{m}^n,z_{m}^n\in\{-1,1\}^{n-m+1}} \P(x_m^n)\P_Z(z_{m}^n)
\prod_{k=m}^n \mathbb I[ y_k=x_k\cdot z_k]\\
&=\sum_{x_{m}^n\in\{-1,1\}^{n-m+1}} \frac 12 \prod_{i=m}^{n-1} p_{x_i,x_{i+1}} \cdot
\eps^{\#\{i\in [m,n]: x_i y_i=-1\}}(1-\eps)^{\#\{i\in [m,n]: x_i y_i=1\}}.
\endaligned
\end{equation}

\section{Random Field Ising Model}\label{sec:RFIM}
It was observed in \cite{Zuk} that  the probability $\Q(y_{m},\ldots,y_n)$ of a cylindric event $\{Y_m=y_m,\ldots, Y_n=y_n\}$, $m\le n$,
can be expressed via a partition function of a random field Ising model.
We exploit this observation further.
Assume $p>0$ and $\eps>0$, and put
$$
J=\frac 12\log\frac {1-p}{p},\quad K=\frac 12\log \frac {1-\eps}{\eps}.
$$
Then for any $(y_m,\ldots,y_n)\in\{-1,1\}^{n-m+1}$, expression for 
the cylinder probability (\ref{eq:cylone}) can be rewritten as 
$$
\Q(y_m,\ldots,y_n)= 
 \frac {c_J}{\lambda_{J,K}^{n-m+1}}
\sum_{x_m^n\in\{-1,1\}^{n-m+1}}\exp\Bigl( J\sum_{i=m}^{n-1} x_ix_{i+1} +K\sum_{i=m}^n x_i y_i\Bigr),
$$
where
$$\aligned
c_J&= {\cosh(J)},\quad
\lambda_{J,K}&=2\left(  \cosh(J+K)+ \cosh(J-K)\right)=4\cosh(J)\cosh(K). \\
\endaligned
$$
The non-trivial part of the cylinder probability  is the sum over all hidden configurations $(x_m,\ldots,x_n)$:
$$
\ZZ_{n,m}(y_n^m):=
\sum_{x_m^n\in\{-1,1\}^{n-m+1}}\exp\Bigl( J\sum_{i=m}^{n-1} x_ix_{i+1} +K\sum_{i=m}^n x_i y_i\Bigr)
$$
is in fact the partition function of the Ising model with the random field given by $y$'s.
Applying  the recursive method of \cite{Rujan}, the partition function can be 
evaluated in the following fashion \cite{BZ}. Consider the following functions
$$\aligned
A(w) &=\frac 12 \log \frac{\cosh(w+J)}{\cosh(w-J)},\\
B(w) &=\frac 12 \log\Bigl[ 4\cdot{\cosh(w+J)}{\cosh(w-J)}\Bigr]= \frac 12 \log\Bigl[ e^{2w}+e^{-2w}+e^{2J}+e^{-2J}\Bigr]
\endaligned
$$
One readily checks that if $s=\pm 1$, then for all $w\in\R$
\begin{equation}\label{basic}
\exp\Bigl( s A(w)+B(w) \Bigr) =2\cosh( w+s J).
\end{equation}
Now the partiton function   can be evaluated 
by summing the right-most spin. 
Namely, suppose $m<n$, $y_m^n\in \{-1,1\}^{n-m+1}$,  then
$$\aligned
\ZZ_{m,n}(y_m^n)
&=\sum_{x_m^{n-1}\in\{-1,1\}^{m-n}} 
\exp\Bigl( J\sum_{i=m}^{n-2} x_{i}x_{i+1}+
K\sum_{i=m}^{n-1} x_i y_i\Bigr)\sum_{x_n\in\{-1,1\}} e^{x_n( Jx_{n-1}+Ky_n)}
\\
&=\sum_{x_m^{n-1}\in\{-1,1\}^{m-n}} 
\exp\Bigl( J\sum_{i=m}^{n-2} x_{i}x_{i+1}+
K\sum_{i=m}^{n-1} x_i y_i\Bigr) \Bigl\{ 2\cosh( Jx_{n-1}+Ky_n) \Bigr\}\\
&=\sum_{x_m^{n-1}\in\{-1,1\}^{m-n}} 
\exp\Bigl( J\sum_{i=m}^{n-2} x_{i}x_{i+1}+
K\sum_{i=m}^{n-1} x_i y_i\Bigr) \exp\Bigl(x_{n-1}A(w_n^{(n)})+B(w_n^{(n)})\Bigr)
\endaligned
$$
where
$$
w_n^{(n)} = Ky_n.
$$
Hence,
$$\aligned
\ZZ_{m,n}(y_m^n)&=\sum_{x_m^{n-1}\in\{-1,1\}^{m-n}} 
\exp\Bigl( J\sum_{i=m}^{n-2} x_{i}x_{i+1}+
K\sum_{i=m}^{n-2} x_i y_i  +x_{n-1}\bigl(\underbrace{K y_{n-1}+A(w_n^{(n)})}_{w_{n-1}^{(n)}}\bigr)\Bigr) \\
&\qquad\times\exp\Bigl(B(w_n^{(n)})\Bigr).
\endaligned
$$
and thus the new sum has exactly the same form, but instead of $Ky_{n-1}$, we now have 
$w_{n-1}^{(n)}=Ky_{n-1}+A(w_n^{(n)})$. Continuing the summation over the remaining right-most $x$-spins, one gets
$$\aligned
\ZZ_{m,n}(y_m^n)&=2\cosh( w_m^{(n)})\exp\Bigl( \sum_{i=m+1}^n B(w_i^{(n)})\Bigr) ,
\endaligned
$$
where 
$$
 w_i^{(n)}=Ky_i+A(w_{i+1}^{(n)})\quad\text{ for every } i<n,
$$
equivalently, since $A(0)=0$,  we can define
$$
w_{i}^{(n)}=0 \quad\forall i>n, \ \text{ and }\ w_i^{(n)}=Ky_i+A(w_{i+1}^{(n)})\quad\forall i\le n.
$$
Therefore, we obtain the following expressions for the cylinder and conditional probabilities 
\begin{equation}\label{express}
\aligned
\Q(y_0^n) &=\frac {c_J}{\lambda^{n+1}_{J,K}} \cosh(w_0^{(n)})\exp\left(\sum_{i=1}^n B(w_i^{(n)})\right),\\
\Q(y_0|y_1^n)&=\frac 1{\lambda_{J,K}}\frac{\cosh(w_0^{(n)})\exp\left( B(w_1^{(n)})\right)}{\cosh(w_1^{(n)})}.
\endaligned
\end{equation}

\section{Thermodynamic formalism}

Let $\Omega=A^{\mathbb Z_+}$, where $A$ is a finite alphabet,
be the space of one-sided infinite sequences $\bo=(\omega_0,\omega_1,\ldots)$ in alphabet $A$ ( $\omega_i\in A$ for all $i$).
We equip $\Omega$ with the metric
$$
d(\bo ,\tilde\bo)=2^{-k(\bo ,\tilde\bo)},
$$
where $k(\bo ,\tilde\bo)=1$ if $\omega_0\ne\tilde\omega_0$,
and $k(\bo ,\tilde\bo) = \max\{k\in\mathbb N: \omega_{i}=\tilde\omega_i\ 
\quad \forall i=0,\ldots,k-1\}$, otherwise. Denote by $S:\Omega\to\Omega$
the left shift: 
$$
 (S\bo)_{i} = \omega_{i+1}\text{ for all }i\in\Z_+.
$$
Borel probability measure $\P$ is translation invariant if 
$$
\P(S^{-1}C) =\P(C)
$$
for any Borel event $C\subseteq \Omega$.

Let us recall the following well-known definitions:

\begin{definition}\label{DefGibbs} Suppose $\P$ is a fully supported translation invariant measure on $\Omega=A^{\mathbb Z_+}$, where $A$ is a finite alphabet.

(i) The measure $\P$ is called a \mybf{$g$-measure}, if for some positive continuous  function $g:\Omega\to (0,1)$
satisfying the normalization condition
$$
\sum_{{\bar\omega}_0\in A} g({\bar\omega}_0,\omega_1,\omega_2,\ldots)=1
$$
for all $\bo=(\omega_0,\omega_1,\ldots)\in\Omega$, one has
$$
\P(\omega_0|\omega_1,\omega_2,\ldots) = g(\omega_0,\omega_1,\ldots)
$$
for $\P$-a.a. $\bo\in\Omega$.

(ii) The measure $\P$ is \mybf{Bowen-Gibbs} for a continuous potential $\phi:\Omega\to\mathbb R$,
if there exist constants $P\in \R$ and $C\ge 1$ such that for all $\mathbf \omega\in\Omega$
and every $n\in \mathbb N$
$$
  \frac 1C \le \frac 
  {\P(
  \{
 \tilde{\bo}\in\Omega:\, \, {\tilde\omega}_0=\omega_0,\ldots {\tilde\omega}_{n-1}=\omega_{n-1}
  \}
  )
  }
  {\exp\bigl(  (S_n\phi)(\bo)-nP \bigr)}\le C,
$$
where $(S_n\phi)(\bo)=\sum_{k=0}^{n-1} \phi(\sigma^k \bo)$.

(iii) The measure $\P$ is called an \mybf{equilibrium state} for continuous
potential $\phi:\Omega\to \R$, if $\P$ attains maximum of the following
functional
\begin{equation}\label{eq:varprin}
h(\P)+\int \phi \, d\P = \sup_{\tilde \P\in\mathcal M_1^*(\Omega)}\Bigl[h(\tilde\P)+\int \phi \, d\tilde \P \Bigr],
\end{equation}
where $h(\P)$ is the Kolmogorov-Sinai entropy of $\P$ and
the supremum is taken over the set $\mathcal M_1^*(\Omega)$ of all translation invariant
Borel probability measures on $\Omega$.
\end{definition}

It is known that every $g$-measure $\P$ is also an equilibrium state
for $\log g$; and that every Bowen-Gibbs measure  $\P$ for potential $\phi$
is an equilibrium state for $\phi$ as well.

\begin{theorem}\label{l:gmeas} The measure $\Q$  on $\{-1,1\}^{\Z_+}$ (c.f., \eqref{express})
is a $g$-measure   for some positive continuous function $g$ with an \emph{exponential decay 
of variation}:
\begin{equation}\label{estimdecay1}
\text{\rm var}_n(g) := \sup_{\by,\tilde\by: y_0^{n-1}={\tilde y}_{0}^{n-1}} \bigl| g(\by)-g(\tilde{\by})\bigr|
\le C\rho^n,
\end{equation}
where $C>0$ and $\rho\in (0,1)$. The measure $\Q$ is also a Bowen-Gibbs measure for a H\"older
continuous potential $\phi:\{-1,1\}^{\Z_+}\to\mathbb R$.
\end{theorem}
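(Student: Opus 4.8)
The plan is to exploit the fact that $A$ is a strict contraction in order to pass to the $n\to\infty$ limit in the recursion defining the quantities $w_i^{(n)}$ from \eqref{express}, thereby producing the $g$-function explicitly, and then to read the Bowen--Gibbs property off the same closed formulas. The one thing that makes everything work is the contraction estimate: differentiating, $A'(w)=\tfrac12\bigl(\tanh(w+J)-\tanh(w-J)\bigr)$, whose absolute value is maximal at $w=0$, so $\theta:=\|A'\|_\infty=\tanh|J|\in[0,1)$ because $0<p<1$; also $B$ is globally $1$-Lipschitz since $|B'|\le1$. (If $\theta=0$, i.e.\ $p=\tfrac12$, then $Y$ is i.i.d.\ and all assertions are trivial, so assume $\theta>0$.) Iterating $w_i^{(n)}=Ky_i+A(w_{i+1}^{(n)})$ from the boundary value $w_{n+1}^{(n)}=0$ and using $|A'|\le\theta$ yields two facts used throughout: (i) for each fixed $i$ the sequence $\{w_i^{(n)}\}_{n\ge i}$ is Cauchy and converges to a limit $w_i^{(\infty)}=w_i^{(\infty)}(\by)$ satisfying $w_i^{(\infty)}=Ky_i+A(w_{i+1}^{(\infty)})$, with $|w_i^{(\infty)}|\le R:=|K|/(1-\theta)$ and $|w_i^{(n)}-w_i^{(\infty)}|\le R\,\theta^{\,n-i+1}$; and (ii) $w_i^{(\infty)}(\by)$ depends on $\by$ only through $(y_i,y_{i+1},\dots)$, and $|w_i^{(\infty)}(\by)-w_i^{(\infty)}(\tilde\by)|\le 2R\,\theta^{\,m-i+1}$ whenever $y_i^m=\tilde y_i^m$. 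Both are routine Banach-fixed-point bookkeeping.

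Define
$$
g(\by):=\frac{1}{\lambda_{J,K}}\cdot\frac{\cosh\bigl(w_0^{(\infty)}\bigr)\exp\bigl(B(w_1^{(\infty)})\bigr)}{\cosh\bigl(w_1^{(\infty)}\bigr)} .
$$
By (i) this is the limit, uniform in $\by$, of the conditional probabilities $\Q(y_0\mid y_1^n)$ in \eqref{express}; in particular $g$ is positive and continuous. It is normalised: summing the identity \eqref{basic} over $s=\pm1$ gives $\exp(B(w))\cosh(A(w))=2\cosh(J)\cosh(w)$, and this together with $\sum_{y_0=\pm1}\cosh(Ky_0+A(w))=2\cosh(K)\cosh(A(w))$ and $\lambda_{J,K}=4\cosh(J)\cosh(K)$ yields $\sum_{y_0}\Q(y_0\mid y_1^n)=1$ for every $n$, hence $\sum_{y_0}g(y_0,y_1,\dots)=1$. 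Since $\Q(Y_0=y_0\mid Y_1^n)\to\Q(Y_0=y_0\mid Y_1,Y_2,\dots)$ $\Q$-a.s.\ by the martingale convergence theorem, $g$ is a version of the one-sided conditional probability of $\Q$; as $\Q$ is translation invariant and fully supported (every cylinder in \eqref{eq:cylone} is a finite sum of positive terms), $\Q$ is a $g$-measure. The estimate \eqref{estimdecay1} is immediate from (ii): if $y_0^{n-1}=\tilde y_0^{n-1}$ then $|w_1^{(\infty)}-\tilde w_1^{(\infty)}|\le 2R\theta^{\,n-1}$ and $|w_0^{(\infty)}-\tilde w_0^{(\infty)}|\le 2R\theta^{\,n}$, and since $(a,b)\mapsto\lambda_{J,K}^{-1}\cosh(a)\exp(B(b))/\cosh(b)$ is Lipschitz on $[-R,R]^2$ (its denominator $\lambda_{J,K}\cosh(b)\ge\lambda_{J,K}>0$), we get $\mathrm{var}_n(g)\le C\rho^n$ with $\rho=\theta\in(0,1)$.

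For the Gibbs part, put $\phi:=\log g$; since $g$ is continuous and strictly positive on the compact space $\{-1,1\}^{\Z_+}$ it is bounded away from $0$ and $\infty$, so $\mathrm{var}_n(\phi)\le C'\rho^n$, i.e.\ $\phi$ is H\"older for the metric $d$. (At this point the Bowen--Gibbs property would already follow from classical results on $g$-measures with regular potential, but the explicit formulas deliver it directly with the sharp pressure $P=0$.) I would obtain the Gibbs inequality by comparing two telescoping products. On one side, decomposing $\Q(y_0^{n-1})$ into conditionals and applying the (shifted) formula \eqref{express} to each factor $\Q(y_k\mid y_{k+1}^{n-1})$, together with $\Q(y_{n-1})=\tfrac12$, gives
$$
\Q(y_0^{n-1})=\frac{1}{2\,\lambda_{J,K}^{\,n-1}}\cdot\frac{\cosh\bigl(w_0^{(n-1)}\bigr)}{\cosh\bigl(w_{n-1}^{(n-1)}\bigr)}\exp\!\Bigl(\sum_{k=1}^{n-1}B\bigl(w_k^{(n-1)}\bigr)\Bigr);
$$
on the other side, since the $w$-sequence of $S^k\by$ at position $j$ equals $w_{k+j}^{(\infty)}(\by)$, the same telescoping gives
$$
\exp\bigl((S_n\phi)(\by)\bigr)=\prod_{k=0}^{n-1}g\bigl(S^k\by\bigr)=\frac{1}{\lambda_{J,K}^{\,n}}\cdot\frac{\cosh\bigl(w_0^{(\infty)}\bigr)}{\cosh\bigl(w_n^{(\infty)}\bigr)}\exp\!\Bigl(\sum_{k=1}^{n}B\bigl(w_k^{(\infty)}\bigr)\Bigr).
$$
Dividing, the powers of $\lambda_{J,K}$ cancel up to the factor $\lambda_{J,K}/2$, the four $\cosh$-factors all lie in $[1,\cosh R]$, and the residual exponent $\sum_{k=1}^{n-1}\bigl(B(w_k^{(n-1)})-B(w_k^{(\infty)})\bigr)-B(w_n^{(\infty)})$ is bounded uniformly in $n$ and $\by$, because $B$ is $1$-Lipschitz, $|w_k^{(n-1)}-w_k^{(\infty)}|\le R\theta^{\,n-k}$ by (i), and $\sum_{k\ge1}\theta^{\,n-k}\le\theta/(1-\theta)$. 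Hence $\Q(y_0^{n-1})/\exp\bigl((S_n\phi)(\by)\bigr)$ is pinched between two positive constants depending only on $p$ and $\eps$, which is exactly the Bowen--Gibbs property with $P=0$.

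The main obstacle is not conceptual — once one notices $\|A'\|_\infty<1$ the rest follows — but organisational: one must track the two-parameter family $w_i^{(n)}=w_i^{(n)}(\by)$ simultaneously in the cut-off $n$ (to define $g$ and to compare $\Q(y_0^{n-1})$ with $e^{(S_n\phi)(\by)}$) and in the argument $\by$ (to bound $\mathrm{var}_n g$), and one must invoke the martingale convergence theorem at the right place to be sure the pointwise limit $g$ is a genuine version of the conditional probability of $\Q$ and not merely a limit of finite-block conditionals.
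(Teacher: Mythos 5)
Your proposal is correct and follows essentially the same route as the paper: the same contraction bound $\sup_w|A'(w)|=|\tanh J|=|1-2p|<1$ for the recursion $w_i^{(n)}=Ky_i+A(w_{i+1}^{(n)})$ (the paper's Lemma \ref{l:decay}), uniform convergence of $\Q(y_0|y_1^n)$ to the explicit $g$ in \eqref{eq:gfunct}, and a direct verification of the Bowen--Gibbs inequality from the closed-form cylinder probabilities with the error controlled by $\sum_k|B(w_k^{(n)})-B(w_k)|\lesssim \varrho/(1-\varrho)$. The only differences are minor: you take the normalized potential $\phi=\log g$ with $P=0$, while the paper takes $\phi(\by)=B(w_0(\by))$ with $P=\log\lambda_{J,K}$ (the two differ by a coboundary plus a constant, exactly as the paper notes), and you obtain the decay rate $\rho=|1-2p|$ without the paper's sharper $\eps$-dependent bound $\rho^*(p,\eps)<|1-2p|$, which the statement as written does not require.
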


The result of Theorem \ref{l:gmeas} is actually true in much greater generality: namely,  
for distributions of Hidden Markov Chains $\{Y_n\}$,
where the underlying Markov chain $\{X_n\}$ has
strictly positive transition probability matrix $P$, see  \cite{Verb}
for review of several results of this nature. 
However, the present situation is rather exceptional since one is able to identify the $g$-function and
 the Gibbs potential $\phi$ explicitly.   Another  interesting question is the estimate of the decay rate $\rho$.
 In \cite{Verb} a number of previously known estimates
of the rate of exponential decay in (\ref{estimdecay1}) have been compared; 
the best known estimate for $\rho$ 
$$\rho\le |1-2p|
$$  is due to \cite{HJ} and \cite{Fernandez}. 
Quite surprisingly the estimate does not depend on $\eps$,
and in fact, it was conjectured in \cite{Verb} that
the estimate could be improved, e.g., by incorporating dependence on $\eps$.
The proof of Theorem \ref{l:gmeas} shows that this is indeed the case and one obtains
a new estimate
$$
\rho\le \rho^*(p,\eps)<|1-2p|.
$$

We start with the following technical result.

\begin{lemma}\label{l:decay} Fix $\by=(y_0,y_1,\ldots)\in\{-1,1\}^{\Z_+}$. For every $n\in \Z_+$,
define the sequence $w^{(n)}_i=w^{(n)}_i(\by)$, $i\in\Z_+$,  by letting $w^{(n)}_i=0$ for every $i\ge n+1$
and $w_i^{(n)}=Ky_i+A(w_{i+1}^{(n)})$ for $i\le n$. Then for every $i\in\Z_+$
$$
\lim_{n\to\infty} w_i^{(n)} =:w_{i}(\by).
$$
Moreover, there exist constants 
$\varrho\in (0,1)$ and $C>0$, both independent of $\by$, such that
\begin{equation}\label{eq:decayest}
 |w_i^{(n)}(\by)-w_i(\by)| \le C\varrho^{n-i}
\end{equation}
for all $n\ge i$, and therefore, $w_i:\{-1,1\}^{\Z_+}\to \R$
is H\"older continuous for every $i\in\Z_+$:
$$
| w_i(\by)-w_i(\tilde\by)|= | w_0(S^i\by)-w_0(S^i\tilde\by)| \le C' \left(d(S^i\tilde\by,S^i\tilde\by)\right)^\theta
$$
for some $C',\theta>0$ and all $ \by,\tilde\by\in\{-1,1\}^{\Z_+}$.
\end{lemma}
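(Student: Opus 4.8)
The plan is to use that the update map $A$ is a uniform contraction with bounded range, so that $w_i^{(n)}(\by)$ rapidly ``forgets'' the artificial boundary value $w_{n+1}^{(n)}=0$. First I would record two elementary properties of $A$. Differentiating, $A'(w)=\frac12\bigl(\tanh(w+J)-\tanh(w-J)\bigr)$; since $\tanh$ is increasing with an even, unimodal derivative, the difference $\tanh(w+J)-\tanh(w-J)$ is largest in absolute value at $w=0$, whence $\sup_{w\in\R}|A'(w)|=|\tanh J|=|1-2p|$. If $p=\frac12$ then $J=0$, $A\equiv 0$ and $w_i^{(n)}=Ky_i=w_i$ identically, so the lemma is trivial; hence I assume $p\ne\frac12$ and set $\varrho:=|1-2p|\in(0,1)$. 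Secondly, $A(0)=0$, $A$ is monotone, and $\lim_{w\to\pm\infty}A(w)=\pm J$, so $|A(w)|\le|J|$ for all $w$; consequently every iterate obeys $|w_i^{(n)}(\by)|\le|K|+|J|=:M$, uniformly in $i,n$ and $\by$.

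Next I would establish the convergence and the estimate \eqref{eq:decayest} in one step. Fixing $\by$ and $m\ge n\ge i$, both families $(w_\ell^{(n)})$ and $(w_\ell^{(m)})$ satisfy $w_\ell=Ky_\ell+A(w_{\ell+1})$ for $\ell\le n$, so the contraction bound gives $|w_\ell^{(n)}-w_\ell^{(m)}|\le\varrho\,|w_{\ell+1}^{(n)}-w_{\ell+1}^{(m)}|$ for every $\ell\le n$. Iterating this from $\ell=i$ up to $\ell=n$ and using $w_{n+1}^{(n)}=0$ together with $|w_{n+1}^{(m)}|\le M$ yields $|w_i^{(n)}(\by)-w_i^{(m)}(\by)|\le\varrho^{\,n+1-i}M$. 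In particular $(w_i^{(n)}(\by))_{n\ge i}$ is Cauchy; I define $w_i(\by)$ to be its limit and let $m\to\infty$ in the last inequality, which gives \eqref{eq:decayest} with $C=M$.

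For the Hölder bound I would first observe that $w_i^{(n)}(\by)$ depends only on $(y_i,\dots,y_n)$, so relabelling indices in the recursion yields $w_i^{(n)}(\by)=w_0^{(n-i)}(S^i\by)$ and hence $w_i(\by)=w_0(S^i\by)$; thus it suffices to prove $|w_0(\by)-w_0(\tilde\by)|\le C' d(\by,\tilde\by)^\theta$, after which $|w_i(\by)-w_i(\tilde\by)|=|w_0(S^i\by)-w_0(S^i\tilde\by)|\le C' d(S^i\by,S^i\tilde\by)^\theta$ follows at once. If $\by\ne\tilde\by$ first disagree at coordinate $j\ge1$, then $w_0^{(j-1)}(\by)=w_0^{(j-1)}(\tilde\by)$, so by the triangle inequality and \eqref{eq:decayest}, $|w_0(\by)-w_0(\tilde\by)|\le 2M\varrho^{\,j-1}$; since $d(\by,\tilde\by)=2^{-j}$ in this case, $\varrho^{\,j}=d(\by,\tilde\by)^{\theta}$ with $\theta:=\log_2(1/\varrho)>0$, giving the claim with $C'=2M/\varrho$. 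The remaining case $j=0$, where $d=\frac12$, is covered by the uniform bound $|w_0(\by)-w_0(\tilde\by)|\le 2M$ after enlarging $C'$.

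The one genuinely non-routine step is the contraction estimate $\sup_w|A'(w)|=|1-2p|$ of the first paragraph; everything after that is a standard telescoping argument, the only mild annoyance being the bookkeeping that links the first-disagreement index $j$ to the metric $d$. I would also note, with an eye on Theorem~\ref{l:gmeas}, that replacing $\sup_{w\in\R}|A'(w)|$ by the supremum of $|A'|$ over the \emph{actual} range of the iterates $w_{i+1}^{(n)}$ --- a bounded subset of $\{Ky+A(u):y=\pm1,\ |u|\le|J|\}$ that stays away from $0$ precisely when $|K|>|J|$ --- yields the strictly smaller rate $\varrho=\rho^*(p,\eps)<|1-2p|$ invoked there; for the present lemma the cruder value $\varrho=|1-2p|$ already suffices.
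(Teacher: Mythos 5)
Your proof is correct and follows essentially the same route as the paper: the uniform contraction bound $\sup_w|A'(w)|=|\tanh J|=|1-2p|$, the a priori bound $|w_i^{(n)}|\le |K|+|J|$, iteration of the contraction down from the boundary index to get a Cauchy estimate and the bound \eqref{eq:decayest}, and the standard passage to H\"older continuity (which you spell out more explicitly than the paper does, including the $p=\tfrac12$ degeneracy and the first-disagreement bookkeeping). The only difference of substance is that the paper's proof goes on to sharpen the rate to $\varrho\le\rho^*(p,\eps)<|1-2p|$ --- treating $\eps<p$ via the invariant region $[|K|-|J|,|K|+|J|]$ and $\eps>p$ via a second-iterate estimate --- which underlies the improved memory-decay claim in the text; you only gesture at this in your closing remark, but it is not needed for the lemma as stated.
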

 \begin{proof}  Suppose $i\le n\le m$. Then 
$$
|w_i^{(n)}-w_i^{(m)}| =| A(w_{i+1}^{(n)})-A(w_{i+1}^{(m)})|\le
| w_{i+1}^{(n)}-w_{i+1}^{(m)}|\cdot \sup_{w}\Bigl|\frac {dA}{dw} \Bigr|.
$$
One has
$$
\frac {dA}{dw}=\frac {\sinh(2J)}{\cosh(2J)+\cosh(2w)}
$$
and hence
\begin{equation}\label{e:basic}
 \varrho:=\sup_{w}  \Bigl|\frac {dA}{dw}\Bigr|=\Bigl|\frac {\sinh(2J)}{\cosh(2J)+1}\Bigr|=|\tanh(J)|= |1-2p|<1.
\end{equation}
Combined with the fact that for all $i\in\Z_+$
$$
|w_i^{(m)}|=|Ky_i+A(w_{i+1}^{(m)})| \le |K| +|\text{arctanh}(1-2p)|\le |K|+|J|=:C_1.
$$
Therefore for $i\le n\le m$
$$
|w_i^{(n)}-w_i^{(m)}|\le  \varrho^{n-i} |w_{n+1}^{(n)}-w_{n+1}^{(m)}| =
 \varrho^{n-i} |w_{n+1}^{(m)}| \le C_1\varrho^{n-i}.
$$
Hence, $\lim_{n\to\infty} w_i^{(n)}=:w_i$ exists and
$$
|w_i^{(n)}-w_i|\le \sum_{m=n}^\infty |w_i^{(m)}-w_i^{(m+1)}|\le
C_1\sum_{m=n}^\infty \varrho^{m-i}= \frac {C_1}{1-\varrho} \rho^{n-i}=:C\varrho^{n-i}.
$$
The estimate in (\ref{e:basic}) can be improved.
Firstly, assume that $\eps<p$. In this case, $|K|>|J|>0$, and if $i\le n$, then
$$
w_i^{(n)} \in \bigl[ -|K|-|J|,-|K|+|J|\bigr] \cup \bigl[ |K|-|J|, |K|+|J|\bigr],
$$
i.e., $|w_i^{(n)}|$ is bounded away from $0$ (see Figure \ref{fig:graphs}.(a)). Therefore, we 
can define $\varrho$ by
$$\aligned
\varrho &=\varrho(J,K)=\varrho(p,\eps) =\sup_{w\in [|K|-|J|,|K|+|J|]} \Bigl| \frac {dA}{dw}\Bigr|=
\biggl|\frac{\sinh(2J)}{\cosh(2J)+\cosh(2K-2J)}\biggr|\\
&=\frac {\eps(1-\eps)|1-2p|}{p^2-2p\eps+\eps}=\frac {\eps(1-\eps)}{(p-\eps)^2+\eps(1-\eps)}|1-2p|<|1-2p|.
\endaligned
$$
 \pgfmathsetmacro{\e}{1.44022}   
\begin{figure}[ht]
\begin{tikzpicture}
  \pgfmathsetmacro{\J}{0.8}
  \pgfmathsetmacro{\K}{1.5}
  \draw[->] (-3,0) -- (3.5,0) node[right] {$w$};
  \draw[->] (0,-2.3) -- (0,2.5);
  \draw[scale=0.95,domain=-3:3.5,smooth,variable=\x,blue,line width=2pt] 
     plot ({\x},{\K+1/2*ln(cosh(\x+\J)/cosh(\x-\J))});
  \draw[scale=0.95,domain=-3:3.5,smooth,variable=\x,red,line width=2pt] 
     plot ({\x},{-\K+1/2*ln(cosh(\x+\J)/cosh(\x-\J))});
\draw[scale=0.95,domain=-3:0,dotted,variable=\x,red,line width=1pt] 
     plot ({\x},{-\K-\J)});
\draw[scale=0.95,domain=0:3,dotted,variable=\x,red,line width=1pt] 
     plot ({\x},{-\K+\J)});
\draw[scale=0.95,domain=-3:0,dotted,variable=\x,blue,line width=1pt] 
     plot ({\x},{\K-\J)});
\draw[scale=0.95,domain=0:3, dotted,variable=\x,blue,line width=1pt] 
     plot ({\x},{\K+\J)});
 \node at (2.4,1.4) {$\scriptstyle F_+(w)=K+A(w)$};
 \node at (2.4,-1.2) {$\scriptstyle F_-(w)=-K+A(w)$};
  \node at (0.4,0.7) {$\scriptstyle K-J$};
  \node at (-0.6,2.2) {$\scriptstyle K+J$};
  \node at (-0.6,-0.6) {$\scriptstyle -K+J$};
  \node at (0.6,-2.2) {$\scriptstyle -K-J$};
 
\draw[scale=0.95,domain=0:1,variable=\x,black,line width=2pt] 
     plot ({0},{(\K-\J)*\x+(\K+\J)*(1-\x)});
\draw[scale=0.95,domain=0:1,variable=\x,black,line width=2pt] 
     plot ({0},{(-\K+\J)*\x+(-\K-\J)*(1-\x)});
\end{tikzpicture}
\begin{tikzpicture}
  \pgfmathsetmacro{\J}{1.4}
  \pgfmathsetmacro{\K}{0.9}
  \draw[->] (-3,0) -- (3.5,0) node[right] {$w$};
  \draw[->] (0,-2.3) -- (0,2.5);
  \draw[scale=0.95,domain=-3:3.5,smooth,variable=\x,blue,line width=2pt] 
     plot ({\x},{\K+1/2*ln(cosh(\x+\J)/cosh(\x-\J))});
  \draw[scale=0.95,domain=-3:3.5,smooth,variable=\x,red,line width=2pt] 
     plot ({\x},{-\K+1/2*ln(cosh(\x+\J)/cosh(\x-\J))});
\draw[scale=0.95,domain=-3:0,dotted,variable=\x,red,line width=1pt] 
     plot ({\x},{-\K-\J)});
\draw[scale=0.95,domain=0:3,dotted,variable=\x,red,line width=1pt] 
     plot ({\x},{-\K+\J)});
\draw[scale=0.95,domain=-3:0,dotted,variable=\x,blue,line width=1pt] 
     plot ({\x},{\K-\J)});
\draw[scale=0.95,domain=0:3, dotted,variable=\x,blue,line width=1pt] 
     plot ({\x},{\K+\J)});
 \node at (2.4,1.4) {$\scriptstyle F_+(w)=K+A(w)$};
 \node at (2.4,-1.2) {$\scriptstyle F_-(w)=-K+A(w)$};
  \node at (0.4,0.3) {$\scriptstyle K-J$};
  \node at (-0.6,2.2) {$\scriptstyle K+J$};
  \node at (-0.6,-0.7) {$\scriptstyle -K+J$};
  \node at (0.6,-2.2) {$\scriptstyle -K-J$};
 
\draw[scale=0.95,domain=0:1,variable=\x,black,line width=2pt] 
     plot ({0},{(\K-\J)*\x+(\K+\J)*(1-\x)});
\draw[scale=0.95,domain=0:1,variable=\x,black,line width=2pt] 
     plot ({0},{(-\K+\J)*\x+(-\K-\J)*(1-\x)});
\end{tikzpicture}
\caption{Graphs of $F_+(w)=K+A(w)$, $F_-(w)=-K+A(w)$ for (a) $\eps<p\le 0.5$ and (b) $p\le\eps\le 0.5$. }\label{fig:graphs}
\end{figure}

If $\eps>p$ (equivalently, $K<J$), then the maps $F_+(w)=K+A(w)$ and $F_-(w)=-K+A(w)$
have no longer disjoint images  (c.f.,  Figure \ref{fig:graphs}.(b)) . Nevertheless, one can consider 
second iterations:  
$$\aligned
|w_i^{(n)}-w_i^{(m)}| &=| A(w_{i+1}^{(n)})-A(w_{i+1}^{(m)})|=\bigl|
A\bigl(Ky_{i+1}+A(w_{i+2}^{(n)})\bigr)-
A\bigl(Ky_{i+1}+A(w_{i+2}^{(m)})\bigr)\bigr|\\
&\le  \bigl(  \sup_{w} |A'(K+A(w)) A'(w)|\bigr) =:\rho^{(2)} |w_{i+2}^{(n)}-w_{i+2}^{(m)}|
\endaligned
$$
One can show that 
\begin{equation}\label{eq:secderone}
\rho^{(2)}=\sup_{w} |A'(K+A(w)) A'(w)| <(1-2p)^2.
\end{equation}
Informally, it is evident that  the maximal value of the derivative $|A'(\cdot)|$, equal to $|1-2p|$,
is attained, if $w=0$ or if $K+A(w)=0$,
but then $K+A(w)\ne 0$ or $w\ne 0$, respectively, and hence (\ref{eq:secderone}) holds.
Similar argument generalises to all $w$: firstly,  note that
\begin{equation}\label{eq:secder}
 |A'(K+A(w)) A'(w)|  = \frac {(1-2p)^2}
 {\bigl(\alpha+(1-\alpha)\cosh(2K +2A(w))\bigr)
 \cdot  \bigl(\alpha+(1-\alpha)\cosh(2w)\bigr)},
\end{equation}
where $\alpha=(1-p)^2+p^2$, $1-\alpha=2p(1-p)$.
Let $\Delta>0$ be such that for all 
$w\in [-\Delta,\Delta]$ one has $|A(w)|<|K|/2$  and   $\cosh(2K+2A(w))>\cosh(K)$, 
and hence
$$
 |A'(K+A(w)) A'(w)|  \le \frac {(1-2p)^2}
 {\bigl(\alpha+(1-\alpha)\cosh(K)\bigr)\cdot 1}<(1-2p)^2.
$$
For $w\not\in [-\Delta,\Delta]$, one  has
$$
 |A'(K+A(w)) A'(w)|  \le \frac {(1-2p)^2}
 {1\cdot \bigl(\alpha+(1-\alpha)\cosh(\Delta)\bigr)}<(1-2p)^2.
 $$
 Hence, 
 $$
 \rho^{(2)} =\min\left\{\frac {(1-2p)^2}
 {\bigl(\alpha+(1-\alpha)\cosh(K)\bigr)}, \frac {(1-2p)^2}
 {\bigl(\alpha+(1-\alpha)\cosh(\Delta)\bigr)}\right
 \}<(1-2p)^2,
 $$
 and hence $\varrho=\sqrt{\rho^{(2)}}<|1-2p|$.
 Sharper bounds can be achieved by studying minimum of
the denominator in (\ref{eq:secder}).
\end{proof}

\begin{proof}[Proof of Theorem \ref{l:gmeas}]
To show that $\Q$ is a $g$-measure it is sufficient to 
show that conditional probabilities $\Q(y_0|y_1^n)$ converge
uniformly as $n\to\infty$. Given that
\begin{equation}\label{eq:condprobtwo}
\Q(y_0|y_1^n)=\frac 1{\lambda_{J,K}}\frac{\cosh(w_0^{(n)})\exp\left( B(w_1^{(n)})\right)}{\cosh(w_1^{(n)})},
\end{equation}
and using the result of Lemma \ref{l:decay}:  $
w_i^n(\by) \rightrightarrows w_i(\by)$ as $n\to\infty$,
we obtain uniform convergence of conditional probabilities,
and hence, $\Q$ is a $g$-measure with $g$ given by
\begin{equation}\label{eq:gfunct}
g(\by) =\frac 1{\lambda_J}\frac{\cosh(w_0(\by))\exp\left( B(w_1(\by))\right)}{\cosh\left(w_1(\by)\right)}.
\end{equation}
Let us introduce the following functions: for $\by\in\{-1,1\}^{\Z_+}$, put
$$
\phi(\by) =  B(w_0(\by)),\quad h(\by) =\cosh( w_0(\by))\exp\left( -B(w_0(\by))\right).
$$
Taking into account that $w_1(\by)=w_0(S\by)$, one has
$$
g(\by) =\frac {e^{\phi(\by)}}{\lambda_{J,K}} \frac {h(\by )}
{h(S\by)}.
$$
Since every $g$-measure is also an equilibrium state for $\log g$,
we conclude that $\Q$ is an equilibrium state for 
$$
\tilde \phi(\by)=\phi(\by) +\log h(\by)-\log h(S\by) -\log \lambda_{J,K}.
$$
The difference $\tilde \phi(\by)-\phi(\by)$ has a very special form: it is a sum of a so-called coboundary ($\log h(\by)-\log h(S\by)$)
and a constant ($-\log \lambda_{J,K}$). Two potentials whose difference
is of a such  form, have identical sets of equilibrium states. The reason is that for any translation invariant measure $\Q'$ one has
$$
\int\bigl( \log h(\by)-\log h(S\by)-\log\lambda_{J,K}\bigr)d\Q' =-\log
\lambda_{J,K}=\text{const}.
$$
Therefore, if $\Q'$ achieves maximum in the righthand side of (\ref{eq:varprin}) for $\tilde\phi$, then  $\Q'$ achieves maximum
for $\phi$ as well. Thus $\Q$ is also an equilibrium state for 
$$
\phi(\by) = B(w_0(\by))= \frac 12\log\Bigl[ 4\sinh^2(w_0(\by))+ \frac 1{p(1-p)}\Bigr].
$$
Any equilibrium measure for a H\"older continuous potential $\phi$
is also a Bowen-Gibbs measure \cite{BowenGibbs}.
In our particular case,  direct proof of the Bowen-Gibbs property for $\Q$
is straightforward. Indeed,
using the result of \eqref{express} and the notation introduced above, for every $\by=(y_0,y_1,\ldots)$ one has
$$\aligned
\Q(y_0^n) &= \frac {c_J}{\lambda^{n+1}_{J,K}}
\exp\Bigl( \sum_{i=1}^{n}B(w_i^{(n)}(\by))\Bigr)\cosh( w_0^{(n)}(\by))\\
&=\frac {c_{J}\cdot\cosh(w_0^{(n)}(\by))}{\exp(B(w_0(\by)))} \exp\Bigl( \sum_{i=1}^n [B(w_i^{(n)}(\by))-
B(w_i(\by))]\Bigr)\\
&\quad\times\exp\Bigl( \sum_{i=0}^{n}B(w_i(\by))-(n+1)\log\lambda_{J,K}\Bigr).
\endaligned
$$
Therefore, for $P=\log \lambda_{J,K}$, 
$$\aligned
\frac {\Q(y_0^n)}{\exp\bigl( (S_{n+1}\phi)(\by)-(n+1)P \bigr)}&=
 \frac {c_{J}\cdot\cosh(w_0^{(n)}(\by))}{\exp(B(w_0(\by)))}  \exp\Bigl( \sum_{i=1}^n [B(w_i^{(n)}(\by))-
B(w_i(\by))]\Bigr)
\endaligned
$$
It only remains to demonstrate 
that the right hand side is 
 uniformly bounded  (both in $n$ and $\by=(y_0,y_1,\ldots)$)
from below and above by some positive constants
$\underline C,\overline C$, respectively.
Indeed, since $p,\eps>0$, $I=[-|K|-|J|,|K|+|J|]$ is a finite interval, by the result of the previous Lemma,
$w_i^{(n)}(\by)\in I$
for all $i$ and $n$. Using \eqref{eq:decayest}, one readily checks that the following choice of constants suffices:
$$\aligned
\overline C&= c_J\frac {\sup_{w\in I} \cosh(w)}{\inf_{w\in I} \exp(B(w))}
\exp\biggl( \frac C{1-\varrho} \sup_{w\in I} \Bigl|\frac {dB}{dw}\Bigr| \biggr)<\infty,\\ 
\underline C&=c_J\frac {\inf_{w\in I} \cosh(w)}{\sup_{w\in I}  \exp(B(w))}
\exp\biggl(-\frac C{1-\varrho} \sup_{w\in I} \Bigl|\frac {dB}{dw}\Bigr| \biggr)>0.
\endaligned
$$
\end{proof}
We complete this section with a curious continued fraction
 representation of the $g$-function (\ref{eq:gfunct}).
 
 \begin{proposition} For every $\by =(y_0,y_1,\ldots)\in\{-1,1\}^{\Z_+}$, one has
  
\begin{equation}\label{eq:continued}
2g(\by)= 
a_1
-\cfrac{b_1}{a_2-\cfrac{b_2}{a_3-\cfrac{b_3}{a_4-\ldots}}}
$$
where for $i\ge 1$
$$\aligned
q_i&=(1-2p)y_{i-1} y_{i},\quad
a_i&= 1+q_i, \quad b_i= 4\eps (1-\eps) q_i.
\endaligned
\end{equation}
 \end{proposition}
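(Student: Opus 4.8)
The plan is to first collapse $2g(\by)$ to an explicit function of the single variable $w_1(\by)$, and then to propagate the defining recursion $w_i=Ky_i+A(w_{i+1})$ of Lemma~\ref{l:decay} through the substitution $t_i:=\tanh(w_i(\by))$; the resulting recursion for the $t_i$ is a M\"obius recursion, which unfolds into the stated continued fraction.

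First I would use \eqref{eq:gfunct} together with $w_0(\by)=Ky_0+A(w_1(\by))$ and the identity \eqref{basic}. Writing $\cosh\!\bigl(Ky_0+A(w_1)\bigr)e^{B(w_1)}=\tfrac12 e^{Ky_0}e^{A(w_1)+B(w_1)}+\tfrac12 e^{-Ky_0}e^{-A(w_1)+B(w_1)}$ and applying \eqref{basic} with $s=\pm1$ gives $e^{Ky_0}\cosh(w_1+J)+e^{-Ky_0}\cosh(w_1-J)$; expanding $\cosh(w_1\pm J)$ and using $y_0\in\{-1,1\}$, this equals $2\cosh J\cosh K\cosh w_1+2y_0\sinh J\sinh K\sinh w_1$. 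Dividing by $\lambda_{J,K}=4\cosh J\cosh K$ yields
$$
2g(\by)=1+y_0\tanh J\,\tanh K\,\tanh\!\bigl(w_1(\by)\bigr)=1+(1-2p)(1-2\eps)\,y_0\,\tanh\!\bigl(w_1(\by)\bigr),
$$
since $\tanh J=1-2p$ and $\tanh K=1-2\eps$.

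Next, set $\sigma:=1-2p$, $\tau:=1-2\eps$ and $t_i:=\tanh(w_i(\by))$. From the explicit formula for $A$ one checks $\tanh(A(w))=\tanh J\cdot\tanh w=\sigma\tanh w$, so the addition formula applied to $w_i=Ky_i+A(w_{i+1})$ (with $\tanh(Ky_i)=y_i\tau$) gives $t_i=(\tau y_i+\sigma t_{i+1})/(1+\sigma\tau y_i t_{i+1})$ for all $i$. Put $R_i:=1+\sigma\tau y_{i-1}t_i$ for $i\ge1$, so that $2g(\by)=R_1$ by the previous step. Using $4\eps(1-\eps)=1-\tau^2$ and the relation $y_i t_i=(\tau+\sigma y_i t_{i+1})/(1+\sigma\tau y_i t_{i+1})$ (an immediate consequence of the recursion for $t_i$), a one-line computation gives, with $q_i=\sigma y_{i-1}y_i$, $a_i=1+q_i$, $b_i=4\eps(1-\eps)q_i$,
$$
a_i-\frac{b_i}{R_{i+1}}=1+q_i\Bigl(1-\frac{1-\tau^2}{1+\sigma\tau y_i t_{i+1}}\Bigr)=1+\sigma\tau y_{i-1}y_i\cdot y_i t_i=R_i .
$$
Iterating this identity $n$ times writes $R_1=2g(\by)$ as a finite continued fraction with innermost entry $R_{n+1}$; replacing $w_{n+1}$ by $0$ there — equivalently $t_{n+1}$ by $0$, i.e.\ $R_{n+1}$ by $1$ — gives a finite truncation of the continued fraction of the statement ($1$ in place of its innermost $a_{n+1}$), equal to $1+\sigma\tau y_0\tanh(w_1^{(n)}(\by))$, and by Lemma~\ref{l:decay} this converges to $2g(\by)$ as $n\to\infty$.

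These manipulations are routine; the only substantive point is the convergence of the continued fraction itself. Because $|\sigma\tau|=|1-2p|\,|1-2\eps|<1$, every exact tail $R_i$ lies in $[1-|\sigma\tau|,1+|\sigma\tau|]\subset(0,2)$, hence is bounded away from $0$ (so no denominator above vanishes); iterating $|R_i-R_i^{(n)}|=\dfrac{|b_i|}{|R_{i+1}|\,|R_{i+1}^{(n)}|}\,|R_{i+1}-R_{i+1}^{(n)}|$ then reduces the comparison with the classical approximants to a lower bound on products of truncated tails. A one-step bound need not be $<1$ when $p$ or $\eps$ is close to $0$ or $1$, exactly as in the passage from \eqref{e:basic} to \eqref{eq:secderone}; but the corresponding two-step contraction — the image under $\Phi_i\circ\Phi_{i+1}$ of the relevant interval, where $\Phi_i(x)=a_i-b_i/x$ — is $<1$, which suffices. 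Alternatively, as noted at the end of the previous paragraph, the required convergence is already contained in Lemma~\ref{l:decay} once the approximants are identified with $1+\sigma\tau y_0\tanh(w_1^{(n)}(\by))$, so strictly speaking no new estimate is needed. The two-step contraction is the one place I would expect to have to be careful.
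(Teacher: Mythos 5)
Your argument is correct and follows essentially the same route as the paper: you derive the identity $2g(\by)=1+(1-2p)(1-2\eps)y_0\tanh(w_1(\by))$ (the paper's \eqref{express3}, which it obtains by unspecified ``elementary transformations''), pass to $\tanh(w_i)$ via $\tanh(A(w))=(1-2p)\tanh(w)$ and the addition formula, and your tails $R_i=1+\sigma\tau y_{i-1}t_i$ are exactly $1+z_i$ in the paper's notation, with your identity $R_i=a_i-b_i/R_{i+1}$ being the paper's recursion $z_i=q_i-b_i/(1+z_{i+1})$. Your additional care about convergence of the truncated continued fractions (identifying them with $1+\sigma\tau y_0\tanh(w_1^{(n)}(\by))$ and invoking Lemma~\ref{l:decay}) goes beyond what the paper writes, which is fine and arguably tightens a point the paper leaves implicit.
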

 \begin{proof} Using elementary transformations, one can show that
 for every $\by=(y_0,y_1,\ldots)\in\{-1,1\}^{Z_+}$ one has
 \begin{equation}\label{express3}
\aligned
g(\by)&=
\frac 1{\lambda_{J,K}} \frac {\cosh(w_0(\by))}{\cosh(w_1(\by))} \exp\Bigl( B(w_1(\by))\Bigr)\\
&=\frac 12 +\frac 12 (1-2p)(1-2\eps)y_0\tanh(w_1(\by))\\
\endaligned
\end{equation}
 Since
$$
\tanh( A(w)) =\tanh(J)\tanh(w)=(1-2p)\tanh(w) \quad \text{for all } w\in\R,
$$ 
for every $i\in \Z_+$, one has
$$\aligned
\tanh(w_i)&=\frac {\tanh(Ky_i)+\tanh(A(w_{i+1}))}{1+\tanh(Ky_i)\cdot\tanh(A(w_{i+1}))}
\\
&=\frac{ (1-2\eps)y_i+(1-2p)\tanh(w_{i+1})}{1+(1-2\eps)(1-2p)y_{i}\tanh(w_{i+1})}\\
&=y_i\frac{ (1-2\eps)+(1-2p)y_i\tanh(w_{i+1})}{1+(1-2\eps)(1-2p)y_{i}\tanh(w_{i+1})},
\endaligned
$$
Therefore, if we let $z_{i}=(1-2p)(1-2\eps)y_{i-1}\tanh(w_i)$, $i\in \mathbb N$, then
$$\aligned
z_{i}&=(1-2p)y_{i-1}y_{i} -\frac {4\eps(1-\eps)(1-2p)y_{i-1}y_{i}} {1+z_{i+1}}
=q_i-\frac{ b_i}{1+z_{i+1}}.
\endaligned
$$ 
Since $g(\by) =\frac 12 +\frac 12z_1$, we obtain the continued fraction expansion \eqref{eq:continued}
\end{proof}

\section{Two-sided conventional probabilities and denoising}

In the previous section we established that $\Q$ is a Bowen-Gibbs measure. The notion of a Gibbs measure originates in Statistical Mechanics, and is not equivalent
to the Bowen-Gibbs definition.
In Statistical Mechanics, one is interested in \mybf{two-sided} conditional probabilities
$$
\Q(y_0|y_{-m}^{-1},y_1^n) \text{ or }
\Q(y_0|y_{<0},y_>0) := \Q(y_0|y_{-\infty}^{-1},y_{1}^\infty).
$$
The method of section 2 can be used to evaluate continual
probabilities $\Q(y_0|y_{-m}^{-1},y_1^n)$, $m,n>0$ for
$\by=(\ldots,y_{-1},y_0,y_{1},\ldots)\in\{-1,1\}^{\Z}$. Indeed,
$$
\Q(y_0|y_{-m}^{-1},y_1^n)= \frac{\Q(y_{-m}^{-1},y_0,y_1^n)}{\Q(y_{-m}^{-1},y_0,y_1^n)+\Q(y_{-m}^{-1},\bar y_0,y_1^n)},
$$
where $\bar y_0=-y_0$. We can evaluate
$$\aligned\Q(y_{-m},\ldots,y_{-1},y_0,y_1,\ldots,y_n)&= 
 \frac {c_J}{\lambda_{J,K}^{n+m+1}}
\sum_{x_{-m}^n\in\{-1,1\}^{n+m+1}}\exp\Bigl( J\sum_{i=-m}^{n-1} x_ix_{i+1} +K\sum_{i=-m}^n x_i y_i\Bigr)\\
&=\frac {c_J}{\lambda_{J,K}^{n+m+1}} {\mathsf Z}_{-m,n}(y_{-m}^n),
\endaligned
$$
by first summing over  spins on the right: $x_n,\ldots,x_{1}$,
and then summing over spins on the left: $x_{-m},\ldots,x_{-1}$.  One has
$$
\aligned
{\mathsf Z}_{-m,n}(y_{-m}^n) =&\sum_{x_{-m},\ldots,x_{0}} 
\exp\Bigl( 
J\sum_{i=-m}^{-1} x_{i}x_{i+1}+
K\sum_{i=m}^{0} x_i y_i+x_0A( w_{1}^{(n)}) \Bigr) 
\exp\left(\sum_{i=1}^n B(w_i^{(n)})\right)\\
&=\exp\left(\sum_{j=-m}^{-1} B(w_j^{(-m)})\right)2\cosh\bigl(  w_0^{(-m,n)}\bigr)  
\exp\left(\sum_{i=1}^n B(w_i^{(n)})\right)\\
\endaligned
$$
where now $w_{-m}^{(-m)}=Ky_{-m}$,
$$
 w_{j+1}^{(-m)} = Ky_{j+1} +A(w_{j}^{(-m)}),\quad j=-m,\ldots,-2.
$$
and
$$
w_0^{(-m,n)} = Ky_0+A(w_{-1}^{(-m)})+A(w_0^{(n)}).
$$
Therefore,
$$\aligned
\Q(y_0|y_{-m}^{-1},y_1^n)&= \frac{{\mathsf Z}_{-m,n}(y_{-m}^{-1},y_0,y_1^n)}{{\mathsf Z}_{-m,n}(y_{-m}^{-1},y_0,y_1^n)+{\mathsf Z}_{-m,n}(y_{-m}^{-1},\bar y_0,y_1^n)}\\
&= \frac { \cosh\bigl(Ky_0+A(w_{-1}^{(-m)})+A(w_1^{(n)}\bigr)}
{ \cosh\bigl(Ky_0+A(w_{-1}^{(-m)})+A(w_1^{(n)})\bigr)+{ \cosh\bigl(-Ky_0+A(w_{-1}^{(-m)})+A(w_1^{(n)})\bigr)}.
}
\endaligned
$$
Again, given this expression, one easily establishes uniform convergence and existence of the limits, 
$$
\Q(y_0|y_{-\infty}^{-1},y_1^\infty)=\lim_{m,n\to\infty} \Q(y_0|y_{-m}^{-1},y_1^n).
$$
Thus the two sided conditional probabilities are also regular, c.f. Theorem \ref{l:gmeas}.

\subsection{Denoising}  Reconstruction of signals corrupted by noise during the transmission is one of the classical problems
in Information Theory. Suppose we observe a sequence $\{y_n\}$, $n=1,\ldots, N$, given by (\ref{model}), i.e.,
$$
y_n=x_n\cdot z_n.
$$ 
where $\{x_n\}$ is some unknown realisation of the   Markov chain,
 and $\{z_n\}$ is  unknown realisation of  the Bernoulli sequence $\{Z_n\}$. The natural
question is, given the observed data $y^N=(y_1,\ldots, y_N)$,
what is the optimal choice of $\hat X_n=\hat X_n(y^N)$ -- the estimate of $X_n$, such that
the empirical zero-one loss (bit error rate)
$$
L_N=\frac 1N\sum_{n=1}^{N} \mathbb I[ \hat X_n\ne  x_n ].
$$
is minimal.
The corresponding standard maximum a posteriori probability (MAP) estimator  (denoiser)
 is given by
$$
\hat X^n =\hat X^n(y^N)
= \mathop{\mathrm{argmax}}\limits_{x\in\{-1,1\}} \ \P[X_n=  x\,|\,Y^N=y_1^N],
\quad n=1,\ldots,N.
$$
In case, parameters of the Markov chain (i.e., $P$) and of the channel (i.e., $\Pi$)
are known, conditional probabilities $\P[X_n=  x\,|\,Y^N=y^N]$ can be 
found using the \mybf{backward-forward algorithm}. Namely,  one has 
\begin{equation}\label{BF}
\P[X_n=  x\,|\,Y^N=y^N] = \frac {\alpha_n(x)\beta_n(x)}{
\sum_{\tilde x\in \A} \alpha_n(\tilde x)\beta_n(\tilde x)}
\end{equation}
where  
$$\aligned
\alpha_{n}(x)&=\P[Y_1^n=y_1^n, X_n=x],\quad \beta_n(x)=\P[Y_{n+1}^N=y_{n+1}^N|X_n=x]
\endaligned
$$
are the so-called  forward and backward variables, 
satisfying simple recurrence relations:
$$\aligned
\alpha_{n+1}(x)&= \sum_{\tilde x\in A} \alpha_n(\tilde x)\ P_{\tilde x,x}\ \Pi_{x,y_{n+1}},\  n=1,\ldots, N-1,\text{ with } \alpha_1(x) = \P(X_1=x)\Pi_{x,y_1},\\ \beta_n(x)&=\sum_{\tilde x\in A} \beta_{n+1}(\tilde x)\ P_{x,\tilde x}\ \Pi_{\tilde x,y_{n+1}},\ n=1,\ldots, N-1, \text{ with } \beta_N(x) =1.
\endaligned
$$
The key observation of \cite{Weissman} is that the probability
distribution $\P[X_n=\cdot\ | Y^N=y^N]$, viewed as a column vector,
can be expressed in terms of two-sided
conditional probabilities \\ $\Q[Y_n=\cdot\ | Y^{N\setminus n} = y^{N\setminus n}]$,
with $N\setminus n=\{1,\ldots,N\}\setminus\{n\}$, as follows
\begin{equation}\label{TWOSIDED}
\P[X_n=\cdot\ | Y^N=y^N] = \frac {\pi_{y_n}\odot \Pi^{-1} \Q[Y_n=\cdot\ | Y^{N\setminus n} = y^{N\setminus n}]}{
\langle\pi_{y_n}\odot \Pi^{-1} \Q[Y_n=\cdot\ | Y^{N\setminus n} = y^{N\setminus n}], \mathbf 1 \rangle},
\end{equation}
where $\Pi$ is the emission matrix, and $\pi_{-1},\pi_1$ are the columns of $\Pi$:
$$
\Pi=\begin{bmatrix} 1-\epsilon & \epsilon\\
\epsilon &1-\epsilon 
\end{bmatrix},
\quad\pi_{-1}=\begin{bmatrix} 1-\epsilon\\
\epsilon 
\end{bmatrix},
\quad\pi_{1}=\begin{bmatrix} \epsilon\\
1-\epsilon 
\end{bmatrix},
\quad\Pi^{-1}=\frac 1{1-2\epsilon} \begin{bmatrix}  
1-\epsilon & -\epsilon\\
-\epsilon &1-\epsilon
\end{bmatrix},
$$
and $\odot$ is componentwise product  of vectors of equal lengths,
$$
u \odot v = ( u_1\cdot v_1,\ldots, u_d\cdot  v_d).
$$
Expression (\ref{TWOSIDED}) opens a possibility of constructing denoisers
 when parameters of the underlying Markov chains are unknown; we continue to assume that the channel  remains known. Indeed, two-sided conditional
 probabilities $ \Q[Y_n=\cdot\ | Y^{N\setminus n} = y^{N\setminus n}]$ could be estimated from the data.  The Discrete Universal Denoiser (DUDE) \cite{Weissman}
 algorithm estimates
 conditional probabilities
\begin{equation}\label{DUDEestim}
 \Q( Y_n = c\,|\, Y_{n-k_N}^{n-1}= a_{-k_N}^{-1}, Y_{n+1}^{n+k_N}= b_{1}^{k_N})=
 \frac {m(a_{-k_N}^{-1},c,b_{1}^{k_N})}{\sum_{\bar c} m(a_{-k_N}^{-1},c,b_{1}^{k_N})}
\end{equation} 
 where $m(a_{-k_N}^{-1},c,b_{1}^{k_N})$ is the number of
   occurrences of the word $a_{-k_N}^{-1}cb_{1}^{k_N}$ in the observed sequence $y^N=(y_1,
 \ldots, y_N)$; the length of right and left contexts is set to $k_N = c\log N$,
 $c>0$.
 DUDE has shown excellent performance in a number of test cases.
In particular,  in caseof the binary memoryless channel and the symmetric
 Markov chain, considered in this paper, performance in 
 comparable to the one of the backward-forward
 algorithm \eqref{BF}, which requires full  knowledge of the source distribution, while DUDE is
 completely oblivious in that respect. 
In our opinion,  excellent performance of DUDE in this case is partially
due to the fact that $\Q$ is a Gibbs measure, admitting
smooth two-sided conditional probabilities, which are well approximated by \eqref{DUDEestim}
and thus can be estimated from the data.
It will be interesting to evaluate performance in cases when the output measure is
not Gibbs.

 Invention of DUDE sparked a great interest in two-sided approaches to information-theoretic problems. It turns out that despite the fact the
efficient algorithms for estimation of  one-sided models exist, the analogous 
two-sided problem is substantially more difficult. 
As alternatives to \eqref{DUDEestim}, other methods to estimate two-sided conditional
probabilities have been suggested , e.g., \cite{OWW2005, FVW2010,YV}. 
  For example,  Yu and Verd\'u \cite{YV} proposed a \mybf{Backward-Forward Product} (BFP) model:
 $$
 \widetilde\Q(y_0|y_{<0},y_{>0}) \propto \widetilde\Q(y_0|y_{<0}) \widetilde\Q(y_0|y_{>0}),
 $$
 and the one-sided conditional probabilities  $\widetilde\Q(y_0|y_{<0})$,
 $ \widetilde\Q(y_0|y_{>0})$ can be estimated using standard one-sided algorithms.
 Note, that in our model,
$$\begin{gathered}
\frac{\widetilde\Q( y_0| y_{<0}) \widetilde \Q( y_0| y_{>0})} 
  {\widetilde\Q( y_0| y_{<0}) \widetilde \Q( y_0| y_{>0})
  +\widetilde\Q( \bar y_0| y_{<0}) \widetilde \Q( \bar y_0| y_{>0})}\\
  =\frac { \cosh(Ky_0 +A(w_{-1}))\cosh(Ky_0 +A(w_{1}))}
   { \cosh(Ky_0 +A(w_{-1}))\cosh(Ky_0 +A(w_{1}))+  \cosh(-Ky_0 +A(w_{-1}))\cosh(-Ky_0 +A(w_{1}))}
  \end{gathered}
$$
 in general does not coincide with 
 $$\frac { \cosh\bigl(Ky_0+A(w_{-1})+A(w_1)\bigr)}
{ \cosh\bigl(Ky_0+A(w_{-1} )+A(w_1)\bigr)+ 
  \cosh\bigl(-Ky_0+A(w_{-1})+A(w_1)\bigr)}=\Q(y_0|y_{<0},y_{>0}).
$$
Nevertheless, the BFP model seems to perform extremely well \cite{YV}.

Among other alternatives, let us mention the possibility
to extend standard one-sided algorithms  to
 produce algorithms for estimating two-sided conditional probabilities from data.
 This approach is investigated in  \cite{Verb99b},
 where the densoising performance of the  resulting Gibbsian models is evaluated.
Gibbsian algorithm  performs better than DUDE:
bit error rates are given in the
table below for noise level $\epsilon=0.2$ and
various values of  $p$ (smaller rates are better). 

\begin{center}
\begin{tabular}{|c|c|c|}
\hline
 $p$  & Gibbs     
& DUDE
  \\
 \hline
 \hline
0.05 &  5.30\% &   5.58\%  \\
0.10 &   9.91\% &     10.48\%  \\
 0.15 &  13.20\% &  13.77\%  \\
0.20 &   18.34\% &   18.77\% \\
\hline
\end{tabular}
\end{center}

One could also try to estimate the Gibbsian potential directly,
e.g., using the estimation procedure proposed in \cite{EV}. This method 
showed promising performance in  experiments on language classification and authorship attribution. In conclusion, let us also mention that  the direct two-sided Gibbs modeling of
stochastic processes opens   possibilities for applying semi-parametric statistical procedures,
as opposed to the universal (parameter free) approach of DUDE.

\section*{Acknowledgments}
Part of the work described in this paper has been completed during author's visit to
the Institute of Mathematics for Industry, Kyushu University. The author is  grateful 
for the hospitality during his stay and the support of the World Premier International Researcher Invitation Program.
\begin{bibdiv}
\begin{biblist}
\bib{BZ}{article}{
  author={Behn, Ulrich},
  author={Zagrebnov, Valentin A.},
  title={One-dimensional Markovian-field Ising model: physical properties and characteristics of the discrete stochastic mapping},
  journal={J. Phys. A},
  volume={21},
  date={1988},
  number={9},
  pages={2151--2165},
  issn={0305-4470},
  review={\MR {952930 (89j:82024)}},
}

\bib{Verb99b}{article}{
  author={Berghout, Steven},
  author={Verbitskiy, Evgeny},
  title={On bi-directional modeling of information sources},
  journal={Work in progress},
  year={2015},
}

\bib{BowenGibbs}{article}{
  author={Bowen, Rufus},
  title={Some systems with unique equilibrium states},
  journal={Math. Systems Theory},
  volume={8},
  date={1974/75},
  number={3},
  pages={193--202},
  issn={0025-5661},
  review={\MR {0399413 (53 \#3257)}},
}

\bib{EV}{article}{
  author={Ermolaev, Victor},
  author={Verbitskiy, Evgeny},
  title={Thermodynamic Gibbs Formalism and Information Theory},
  book={ title={The Impact of Applications on Mathematics}, series={Mathematics for Industry}, volume={1}, year={2014}, editor={Wakayama et al, Masato}, publisher={Springer Japan}, },
  pages={349-362},
}

\bib{FVW2010}{article}{
  author={Fernandez, F.},
  author={Viola, A.},
  author={Weinberger, M.J.},
  booktitle={Data Compression Conference (DCC), 2010},
  title={Efficient Algorithms for Constructing Optimal Bi-directional Context Sets},
  year={2010},
  pages={179-188},
}

\bib{Fernandez}{article}{
  author={ Fern\'andez, R.},
  author={Ferrari, P.A.},
  author={Galves, A.},
  title={Coupling, renewal and perfect simulation of chains of infinite order},
  journal={Lecture Notes for the Vth Brazilian school of Probability, Ubatuba},
  date={2001},
}

\bib{HJ}{article}{
  author={Hochwald, Bertrand M.},
  author={Jelenkovi{\'c}, Predrag R.},
  title={State learning and mixing in entropy of hidden Markov processes and the Gilbert-Elliott channel},
  journal={IEEE Trans. Inform. Theory},
  volume={45},
  date={1999},
  number={1},
  pages={128--138},
  issn={0018-9448},
  review={\MR {1677853 (99k:94028)}},
}

\bib{OWW2005}{article}{
  author={Ordentlich, E.},
  author={Weinberger, M.J.},
  author={Weissman, T.},
  title={Multi-directional context sets with applications to universal denoising and compression},
  booktitle={ISIT 2005. Proceedings International Symposium on Information Theory. },
  year={2005},
  pages={1270-1274},
}

\bib{Rujan}{article}{
  title={Calculation of the free energy of Ising systems by a recursion method},
  journal={Physica A: Statistical and Theoretical Physics},
  volume={91},
  number={3-4},
  pages={549 - 562},
  year={1978},
  author={Ruj{\'a}n, P.},
}

\bib{Weissman}{article}{
  author={Weissman, Tsachy},
  author={Ordentlich, Erik},
  author={Seroussi, Gadiel},
  author={Verd{\'u}, Sergio},
  author={Weinberger, Marcelo J.},
  title={Universal discrete denoising: known channel},
  journal={IEEE Trans. Inform. Theory},
  volume={51},
  date={2005},
  number={1},
  pages={5--28},
  issn={0018-9448},
  review={\MR {2234569 (2008h:94036)}},
}

\bib{YV}{article}{
  author={Yu, Jiming},
  author={Verd{\'u}, Sergio},
  title={Schemes for bidirectional modeling of discrete stationary sources},
  journal={IEEE Trans. Inform. Theory},
  volume={52},
  date={2006},
  number={11},
  pages={4789--4807},
  issn={0018-9448},
  review={\MR {2300356 (2007m:94144)}},
}

\bib{Zuk}{article}{
  author={Zuk, O.},
  author={Domany, E.},
  author={Kanter, I.},
  author={Aizenman, M.},
  title={From Finite-System Entropy to Entropy Rate for a Hidden Markov Process},
  journal={IEEE Sig. Proc. Letters},
  volume={13},
  date={2006},
  number={9},
  pages={517--520},
}

\bib{Verb}{article}{
  author={Verbitskiy, E.A.~},
  title={Thermodynamics of Hidden Markov Processes},
  pages={ 258-272},
  conference={ title={Papers from the Banff International Research Station Workshop on Entropy of Hidden Markov Processes and Connections to Dynamical Systems,}, },
  book={ publisher={London Mathematical Society, Lecture Note Series \textbf {385}}, editor={B. Markus},editor={K. Petersen}, editor={ T. Weissman}, },
  date={2011},
}

\end{biblist}
\end{bibdiv}

\end{document}